\newtheorem{theorem}{Theorem}[section]
\newtheorem{corollary}[theorem]{Corollary}
\newtheorem{proposition}[theorem]{Proposition}
\newtheorem{lemma}[theorem]{Lemma}
\newtheorem{remark}[theorem]{Remark}
\begin{document}

\title[A characterization of Inoue surfaces]{A CHARACTERIZATION OF INOUE SURFACES}

\author{Marco Brunella}

\address{Marco Brunella, Institut de Math\'ematiques de Bourgogne
-- UMR 5584 -- 9 Avenue Savary, 21078 Dijon, France}

\begin{abstract}
We give a characterization of Inoue surfaces in terms of automorphic
pluriharmonic functions on a cyclic covering. Together with results
of Chiose and Toma, this completes the classification of compact
complex surfaces of K\"ahler rank one.
\end{abstract}

\maketitle

In this paper we shall prove a conjecture proposed in \cite{C-T}:

\begin{theorem}\label{main}
Let $S$ be a compact connected complex surface of algebraic
dimension 0. Suppose that there exists an infinite cyclic covering
$\widetilde S \buildrel\pi\over\rightarrow S$ (with covering
transformations generated by $\varphi\in Aut(\widetilde S)$) and a
nonconstant positive pluriharmonic function $F$ on $\widetilde S$
such that
$$F\circ\varphi = \lambda\cdot F$$
for some $\lambda\in{\mathbb R}^+$. Then $S$ is a (possibly blown
up) Inoue surface.
\end{theorem}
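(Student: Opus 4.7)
The plan is to convert the automorphic pluriharmonic function $F$ into holomorphic data on $S$, analyse the resulting foliation, and then match $S$ against Inoue's list. The key initial observation is that, since $F$ is pluriharmonic, $\eta := \partial F$ is a closed holomorphic $(1,0)$-form on $\widetilde S$, and the equivariance $F\circ\varphi = \lambda F$ upgrades to $\varphi^*\eta = \lambda\eta$. Hence $\eta$ descends to a section $\eta_S$ of $\Omega^1_S\otimes L$, where $L$ is the flat line bundle on $S$ with positive real monodromy $\lambda$ along the generator of $\pi_1(S)/\pi_1(\widetilde S)\simeq\mathbb Z$. Its zero divisor is effective; since $S$ has algebraic dimension $0$, each component must be a smooth rational curve of negative self-intersection, for otherwise a nonconstant meromorphic function would arise. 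After contracting these components we may assume $\eta_S$ is nowhere vanishing, reducing the task to showing that $S$ itself is an Inoue surface.

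Next, the closed form $\eta$ on $\widetilde S$ is locally the differential of a holomorphic function $h$ with $\mathrm{Re}\,h = F$; these local primitives assemble (modulo the periods of $\eta$) into a holomorphic map $h:\widetilde S\to\mathbb C$ whose image lies in a half-plane because $F>0$, and which after normalisation we place in $\mathbb H$. The foliation $\mathcal F = \ker\eta_S$ is smooth on $S$, and the fibres of $h$ are unions of its leaves on $\widetilde S$. The core analytic step is to show that, on the universal cover $\widehat S\to S$, the map $h$ together with a transverse leaf parameter defines a biholomorphism $\widehat S\simeq\mathbb H\times\mathbb C$. This breaks into three substeps: ruling out compact leaves of $\mathcal F$ (immediate from algebraic dimension $0$); uniformising every leaf by $\mathbb C$, using that the dilation $\varphi$ moves points along leaves of the real foliation $\{F=\mathrm{const}\}$ in a way that forces the parabolic type; and showing that $\mathcal F$ has no transverse holonomy, which once more exploits the combination of $\varphi$ and the absence of meromorphic functions.

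Once $\widehat S\simeq\mathbb H\times\mathbb C$ is in hand, the deck group $\pi_1(S)$ acts preserving both the horizontal foliation $\{w=\mathrm{const}\}$ and the function $\mathrm{Im}\,w$ up to the character $\pi_1(S)\to\mathbb R^+$ defined by $\lambda$. Standard facts about $\mathrm{Aut}(\mathbb H\times\mathbb C)$ then force every element of the action to be affine, of the form $(w,z)\mapsto(aw+b,cz+d)$ with $a>0$; the subgroup $\pi_1(\widetilde S)$ consists of pure translations in $w$, while the class of $\varphi$ lifts to a genuine dilation $w\mapsto\lambda w$ together with a linear map in $z$. Matching the resulting discrete cocompact affine group with the list of Inoue crystallographic groups (types $S_M$, $S^{(+)}_{N,p,q,r,t}$, $S^{(-)}_{N,p,q,r}$) completes the identification. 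The step I expect to be the main obstacle is precisely the global product structure $\widehat S\simeq\mathbb H\times\mathbb C$: one must simultaneously uniformise each leaf by $\mathbb C$, rule out transverse holonomy, and fit the two coordinates into a biholomorphism, and all three require delicate and intertwined use of the three hypotheses $F>0$, $F\circ\varphi=\lambda F$ and $\mathrm{alg.dim}(S)=0$.
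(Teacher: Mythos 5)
Your opening moves coincide with the paper's: $\omega=\partial F$ descends to a twisted closed holomorphic $1$-form with values in the flat bundle $L$ of monodromy $\lambda$, and the induced foliation $\mathcal F$ is the right object to study. But the step ``after contracting these components we may assume $\eta_S$ is nowhere vanishing'' hides the entire content of the theorem. First, the zero set of $\eta_S$ need not be divisorial: it can a priori contain isolated points, i.e.\ critical points of $F$, and these can never be removed by a bimeromorphic modification. Their nonexistence is exactly the statement $c_2(S_{\rm min})=0$, which is what the whole paper is devoted to proving; you cannot assume it. Second, even for the divisorial part, contracting a negative-definite configuration of rational curves generally produces a \emph{singular} surface, and the function downstairs still has an isolated critical point at the image of the configuration; the paper needs a genuine argument (the lemma on genus-zero Milnor fibres, fed by the structure of the regular fibres of $\log F$) to show that such a contraction would in fact land on a smooth point, contradicting minimality. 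So the reduction to ``$\eta_S$ nowhere vanishing'' is not available at the outset.

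The second gap is the parabolicity of the leaves. Your proposed mechanism --- ``the dilation $\varphi$ moves points along leaves of the real foliation $\{F=\mathrm{const}\}$'' --- is not correct as stated: $\varphi$ sends $\{F=c\}$ to $\{F=\lambda c\}$, so it does not act on a fixed level set, and no such soft argument is known. The paper proves parabolicity by integrating along the leaves of a regular fibre against the transverse measure $d^cF$ to get a closed positive current, invoking Lamari's theorem to conclude that this current is \emph{exact} (here $a(S)=0$ and the fact that the current charges no curve are essential), and then running a foliated Gauss--Bonnet argument against $c_1(T_{\mathcal F})$. Finally, the product structure $\widehat S\simeq\mathbb H\times\mathbb C$, which you correctly identify as the main obstacle, is precisely what the paper \emph{avoids} having to prove: instead it establishes the vanishing of Chern numbers ($N_{\mathcal F}$ flat from the structure of ${\rm Z}(\omega)$, and, in the cylindrical case, $T_{\mathcal F}^{\otimes 2}$ flat via the canonical section $(z\partial/\partial z)^{\otimes 2}$ and Koebe distortion) and then quotes Inoue's classification theorem. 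A direct proof of the product structure would require Nishino's and Ohsawa's theorems, as sketched only in a remark of the paper; note also that in the cylindrical case the leaves are $\mathbb C^*$, not $\mathbb C$, so your final group-theoretic matching would have to be carried out for that covering as well. As it stands your outline assumes the two hardest points rather than proving them.
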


The class of Inoue surfaces was discovered by Inoue (and
independently Bombieri) around 1972 \cite{Ino} \cite{Nak}. They are
special (and explicit) compact quotients of ${\mathbb
H}\times{\mathbb C}$, and they enjoy the following properties:
\begin{enumerate}
\item[-] the first Betti number is 1, the second Betti number is 0;
\item[-] they admit holomorphic foliations;
\item[-] they do not contain compact complex curves.
\end{enumerate}
Conversely, Inoue proved in \cite{Ino} that any compact connected
complex surface with the above properties is a Inoue surface.

Our proof of Theorem \ref{main} will be ultimately a reduction to
Inoue's theorem. The pluriharmonic function $F$ naturally induces a
holomorphic (and possibly singular) foliation ${\mathcal F}$ on $S$.
By a ``topological'' study of such a foliation we will be able to
understand some topological structure of $S$, and in particular to
show that $c_2(S_{\rm min})=0$ or $c_1^2(S_{\rm min})=0$ (where
$S_{\rm min}$ denotes the minimal model of $S$). From this vanishing
of Chern numbers, and results of Kodaira and Inoue, the conclusion
will be immediate. Remark that, conversely and by construction,
every Inoue surface satisfies the hypotheses of Theorem \ref{main},
which therefore gives a precise characterization of Inoue surfaces.

Together with the results of \cite{C-T}, Theorem \ref{main} allows
to complete the classification of compact surfaces of K\"ahler rank
one. Recall \cite{H-L} \cite{C-T} that a compact connected complex
surface $S$ has {\it K\"ahler rank one} if it is not K\"ahlerian but
it admits a closed semipositive $(1,1)$-form, not identically
vanishing (this is not the original definition of \cite{H-L}, but it
is equivalent to it by the results of \cite{C-T}, see also
\cite{Lam} and \cite{Tom}).

\begin{corollary} \label{rankone} {\rm (\cite{C-T} + Theorem \ref{main})}.
The only compact connected complex surfaces of K\"ahler rank one
are:
\begin{enumerate}
\item[(1)] Non-K\"ahlerian elliptic fibrations;
\item[(2)] Certain Hopf surfaces, and their blow-ups;
\item[(3)] Inoue surfaces, and their blow-ups.
\end{enumerate}
\end{corollary}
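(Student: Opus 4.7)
The plan is to split the analysis of a non-K\"ahler surface $S$ of K\"ahler rank one according to its algebraic dimension $a(S)$, which must be $0$ or $1$ since $a(S)=2$ would force $S$ to be projective and hence K\"ahlerian.

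If $a(S)=1$, I would argue that $S$ is a (non-K\"ahler) elliptic fibration and thus falls into item~(1); the required analysis, including the verification that the closed semipositive $(1,1)$-form is essentially the pull-back of a positive form from the base curve, is carried out in \cite{C-T}. The converse direction is immediate, since every non-K\"ahler elliptic fibration carries such a pulled-back semipositive form.

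The heart of the matter is the case $a(S)=0$. Here I would appeal to the structural work of Chiose--Toma in order to produce a dichotomy: starting from the closed semipositive $(1,1)$-form $\sigma$ supplied by the K\"ahler rank one hypothesis, either one recognizes $S$ as (a blow-up of) a Hopf surface of the type appearing in item~(2), or else one constructs an infinite cyclic covering $\pi\colon\widetilde S\to S$, with covering transformation generator $\varphi$, on which $\pi^{*}\sigma$ admits a global potential $F$ that is pluriharmonic, nonconstant, positive, and automorphic in the sense $F\circ\varphi=\lambda F$ for some $\lambda\in{\mathbb R}^{+}$. In this remaining case the hypotheses of Theorem~\ref{main} are exactly what is needed, and its conclusion places $S$ in item~(3).

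The converse directions for (2) and (3) follow from the explicit descriptions of the surfaces involved: the relevant Hopf surfaces carry the required semipositive forms by the construction of \cite{C-T}, while for an Inoue surface, realized as a quotient of ${\mathbb H}\times{\mathbb C}$, one checks directly that a suitable $\Gamma$-invariant $(1,1)$-form on the universal cover descends to $S$. The genuine obstacle of the argument has already been absorbed into Theorem~\ref{main}; granting that theorem together with the dichotomy of \cite{C-T}, the corollary reduces to the bookkeeping of the three cases above.
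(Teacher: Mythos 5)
Your proposal matches the paper's intended argument: the corollary is stated as an immediate combination of the Chiose--Toma dichotomy (elliptic fibrations for $a(S)=1$; for $a(S)=0$, either a Hopf surface up to blow-up or an automorphic positive pluriharmonic function on an infinite cyclic covering) with Theorem \ref{main}, and the paper offers no further proof beyond that attribution. Your fleshing out of the case division is exactly the bookkeeping the paper delegates to \cite{C-T}.
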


In the case of a Inoue surface, a closed semipositive $(1,1)$-form
is given by $(dF/F)\wedge (d^cF/F)$, with $F$ as in Theorem
\ref{main}.

\section{Geometric preliminaries}

Let $S$ be a surface as in Theorem \ref{main}. Without loss of
generality, we may assume that $S$ is minimal, since the hypotheses
are clearly bimeromorphically invariant. The assumption $a(S)=0$
implies that $S$ belongs to the class ${\rm VII}_\circ$ \cite[p.
188]{BPV}, that is $b_1(S)=1$ and $kod(S)=-\infty$: the existence of
a positive nonconstant pluriharmonic function on some covering of
$S$ excludes the case of tori and K3 surfaces. For the same reason,
$S$ cannot be a Hopf surface.

We claim that, in order to prove Theorem \ref{main}, it is
sufficient to prove that
$$c_2(S)=0$$
or
$$c_1^2(S)=0.$$
Indeed, we firstly observe that these two conditions are equivalent,
by Noether formula and $\chi ({\mathcal O}_S)=0$ (which follows from
$S\in {\rm VII}_\circ$). Then, $c_2(S)=0$ and $b_1(S)=1$ imply
$b_2(S)=0$. By a classical result of Kodaira \cite[Th. 2.4]{Nak},
$S$ contains no compact complex curve, otherwise it would be a Hopf
surface. Since $S$ also admits a holomorphic foliation (see below),
all the hypotheses of Inoue's theorem \cite{Ino} are satisfied and
we get that $S$ is a Inoue surface.

The automorphic function $F$ on $\widetilde S$ induces a real
analytic map
$$f=\log F : S \longrightarrow {\mathbb S}^1={\mathbb R}\big/{\mathbb Z}
\cdot\log\lambda .$$ The regular fibers of $f$ are smooth Levi-flat
hypersurfaces in $S$, because $F$ is pluriharmonic. However, $f$
could have also some singular fibers, corresponding to critical
points of $F$. In fact, our aim is precisely to show that these
singular fibers do not exist at all, since this is clearly
equivalent to $c_2(S)=0$. In order to avoid some cumbersome
statement, we shall suppose that the fibers of $f$ are connected.
The general case requires only few straightforward modifications of
the proof below.

The holomorphic 1-form $\omega =\partial F \in\Omega^1(\widetilde
S)$ descends to $S$ to a holomorphic section (still denoted by
$\omega$) of $\Omega^1(S)\otimes L$, where $L$ is a flat line bundle
(the one defined by the cocycle $\lambda\in{\mathbb R}^+\subset
{\mathbb C}^* = H^1(S,{\mathbb C}^*)$). This twisted closed
holomorphic 1-form induces a holomorphic foliation ${\mathcal F}$ on
$S$, which is tangent to the fibers of $f$.

In the following it will be important to distinguish between the
singularities of ${\mathcal F}$, ${\rm Sing}({\mathcal F})$, and the
zeroes of $\omega$, ${\rm Z}(\omega )$. The former are only isolated
points, since (as customary) we like to deal with ``saturated''
foliations. The latter, on the contrary, may contain some compact
complex curves. Remark also that ${\rm Z}(\omega )$ coincides with
the set of critical points of $f$, ${\rm Crit}(f)$.

The foliation ${\mathcal F}$ has a normal bundle $N_{\mathcal F}$
and a tangent bundle $T_{\mathcal F}$ \cite{Br1}, which are related
to the canonical bundle $K_S$ of $S$ by the adjunction type relation
$$N_{\mathcal F}\otimes T_{\mathcal F} = K_S^{-1} .$$
Because ${\mathcal F}$ is generated by $\omega\in\Omega^1(S)\otimes
L$, we have
$$N_{\mathcal F} = L \otimes {\mathcal O}(-\sum m_jC_j)$$
where $\{ C_j\}$ are the curves contained in ${\rm Z}(\omega )$ (if
any) and $\{ m_j\}$ are the respective vanishing orders.

We shall prove below that ${\rm Z}(\omega )$ is at most composed by
isolated points, giving by the previous formula the flatness of
$N_{\mathcal F}=L$. Then we shall prove that either $c_2(S)=0$ or
$T_{\mathcal F}$ is also flat. But in this second case we therefore
get that $K_S$ is flat too, hence $c_1^2(S)=0$.

\section{The structure of the regular fibers}

Here we consider a regular fiber of $f$,
$$M_\vartheta = \{ f=\vartheta \}\ , \quad \vartheta\ {\rm regular\
value},$$ and prove that it has the expected structure.

\begin{proposition}\label{regular}
The leaves of ${\mathcal F}\vert_{M_\vartheta}$ are either all
isomorphic to ${\mathbb C}$, or all isomorphic to ${\mathbb C}^*$.
In the first case, $M_\vartheta$ is diffeomorphic to ${\mathbb
T}^3$, and ${\mathcal F}\vert_{M_\vartheta}$ is a linear totally
irrational foliation. In the second case, $M_\vartheta$ is a
${\mathbb S}^1$-bundle over ${\mathbb T}^2$, and ${\mathcal
F}\vert_{M_\vartheta}$ is the pull-back of a linear irrational
foliation on ${\mathbb T}^2$.
\end{proposition}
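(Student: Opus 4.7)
The starting point is to extract from $F$ a closed nonsingular real $1$-form $\eta$ on $M_\vartheta$ defining $\mathcal F|_{M_\vartheta}$. On the lift $\widetilde M_t=\{F=e^t\}\subset\widetilde S$, diffeomorphic to $M_\vartheta$ via $\pi$ under the connectedness hypothesis, I set $\eta:=d^cF|_{\widetilde M_t}$. This $1$-form is closed (because $F$ pluriharmonic gives $dd^cF=2i\partial\bar\partial F=0$) and nowhere zero (because $\vartheta$ regular implies $\omega=\partial F$ does not vanish on $\widetilde M_t$, and $dF|_{\widetilde M_t}=0$ then forces $d^cF|_{\widetilde M_t}\ne 0$). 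Since $dF|_{\widetilde M_t}=0$, the restriction $\omega|_{\widetilde M_t}$ is a non-zero imaginary multiple of $\eta$, so $\ker\eta=T\mathcal F|_{\widetilde M_t}$. Thus $\mathcal F|_{M_\vartheta}$ is a codimension-one Lie $\mathbb R$-foliation on the compact $3$-manifold $M_\vartheta$, and it has trivial holonomy.

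Next I case-split on the period group $P=\int_{\pi_1(M_\vartheta)}\eta\subset\mathbb R$. If $\mathrm{rk}(P)=1$, then $\mathcal F|_{M_\vartheta}$ is a proper fibration over $\mathbb S^1$, yielding a real one-parameter continuous family of pairwise distinct compact complex curves in $S$. Such a family determines a real analytic arc in the Barlet space $\mathcal B(S)$ whose Zariski closure is a positive-dimensional analytic family of compact curves, hence $a(S)\ge 1$, contradicting the standing hypothesis. Therefore $\mathrm{rk}(P)\ge 2$, $P$ is dense in $\mathbb R$, and every leaf of $\mathcal F|_{M_\vartheta}$ is dense and non-compact.

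I then identify the conformal type of each leaf $L$. The form $\eta$ provides a holonomy-invariant transverse measure of finite total mass, so by Plante's theorem every leaf has at most polynomial (linear or quadratic) volume growth in the Riemannian metric induced from any Hermitian metric on $S$. This metric is complete and of bounded geometry on $L$ because $S$ is compact, so standard parabolicity criteria (subexponential growth of a bounded-geometry Riemann surface implies universal cover $\mathbb C$) show that $L$ is conformally $\mathbb C$, $\mathbb C^*$, or an elliptic curve. Non-compactness rules out the elliptic case, and triviality of holonomy makes all leaves in $M_\vartheta$ mutually diffeomorphic, giving the dichotomy in the proposition.

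Finally I recover $M_\vartheta$. In the case $L\cong\mathbb C$, the developing map of the Lie $\mathbb R$-foliation (Fedida's structure theorem) realizes the universal cover of $M_\vartheta$ as $\mathbb C\times\mathbb R$ with $\pi_1(M_\vartheta)$ acting cocompactly by automorphisms preserving both factors; this forces $\pi_1(M_\vartheta)\cong\mathbb Z^3$ acting by translations, so $M_\vartheta\cong\mathbb T^3$ and $\mathcal F|_{M_\vartheta}$ is linear and totally irrational. In the case $L\cong\mathbb C^*$, unrolling $\pi_1(L)\cong\mathbb Z$ produces a $\mathbb Z$-cover to which the previous argument applies, exhibiting $M_\vartheta$ as an $\mathbb S^1$-bundle over $\mathbb T^2$ with $\mathcal F|_{M_\vartheta}$ the pull-back of a linear irrational foliation on $\mathbb T^2$. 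The step I expect to require the most care is the parabolicity argument: one must match the intrinsic conformal structure of $L$ against a metric of controlled polynomial area growth, as otherwise Plante's volume bound does not directly translate into a statement about the universal cover.
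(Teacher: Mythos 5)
Your reduction to the closed nonsingular $1$-form $\eta=d^cF|_{M_\vartheta}$ and the exclusion of the all-leaves-compact case via $a(S)=0$ both match the paper. The gap is in the step you yourself flag as delicate: the claim that subexponential (or quadratic) volume growth of a bounded-geometry leaf forces its universal cover to be $\mathbb{C}$ is false. Quadratic volume growth gives parabolicity only in the potential-theoretic sense (recurrence of Brownian motion, absence of a Green's function); it does not constrain the topology, hence not the uniformization type, of the leaf. A $\mathbb{Z}^2$-cover of a genus-two surface has bounded geometry and quadratic area growth, yet it has infinite genus and its universal cover is the disc. In the present situation the leaves are, by Tischler's theorem, abelian covers of the fiber $\Sigma_g$ of a fibration of $M_\vartheta$ over $\mathbb{S}^1$, and the entire content of the proposition is to prove $g=1$; your growth argument silently assumes the leaves are planar or of genus at most one, which is exactly what must be shown. (Two smaller points: Plante's theorem goes in the other direction --- subexponential growth \emph{produces} an invariant transverse measure, it is not a consequence of one --- and the period rank, hence the polynomial growth degree of the leaves, is a priori bounded only by $b_1(M_\vartheta)$, not by $2$.)

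The paper's proof of parabolicity cannot be replaced by an argument intrinsic to the $3$-manifold: it uses the complex geometry of $S$. One forms the foliated cycle $\Phi(\eta)=\int_{M_\vartheta}\beta\wedge\eta$, observes that it charges no compact curves (again by $a(S)=0$), and invokes Lamari's theorem to conclude that $\Phi$ is an \emph{exact} positive current on $S$, whence $c_1(T_{\mathcal F})\cdot[\Phi]=0$. If the leaves were hyperbolic, the leafwise Poincar\'e metric, regularized and extended to a hermitian metric on $T_{\mathcal F}$ over all of $S$, would have curvature form pairing strictly negatively with $\Phi$ --- a contradiction (foliated Gauss--Bonnet). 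This is the missing ingredient; the remark following the proposition in the paper stresses precisely that $\Phi$ is exact as a current on $S$ but \emph{not} as a current on $M_\vartheta$, so no argument confined to the hypersurface can succeed. You would need to import this (or an equivalent) complex-analytic input to close the gap; the final structure statements ($\mathbb{T}^3$, respectively an $\mathbb{S}^1$-bundle over $\mathbb{T}^2$) then follow from standard Tischler theory once $g=1$, essentially as you indicate.
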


The first case will lead to Inoue surface of type $S_M$, and the
second case to those of type $S_{N,p,q,r;t}^{(+)}$ or
$S_{N,p,q,r}^{(-)}$ \cite{Ino}.

\begin{proof}
The foliation ${\mathcal F}_\vartheta = {\mathcal
F}\vert_{M_\vartheta}$ is defined by the closed and nonsingular
1-form $\beta = d^cF\vert_{M_\vartheta}$ (which is well defined on a
neighbourhood of any fiber, up to a multiplicative constant). We may
use some classical results of Tischler \cite[I.4]{God} concerning
the structure of (real) codimension one foliations defined by closed
1-forms. According to those results, the foliation can be smoothly
perturbed to a fiber bundle over the circle with fiber $\Sigma_g$,
the (real) oriented compact surface of genus $g\ge 1$. Note that,
since $a(S)=0$, the leaves of ${\mathcal F}_\vartheta$ cannot be all
compact, and so they are all dense in $M_\vartheta$. Moreover, by
using the flow of a smooth vector field $v$ on $M_\vartheta$ such
that $\beta (v)\equiv 1$, and the closedness of $\beta$, we see that
the leaves are all diffeomorphic to the same abelian covering of
$\Sigma_g$.

The above flow of $v$ sends leaves to leaves, but of course it does
not need to preserve the complex structure of the leaves, that is it
does not need to realize a conformal diffeomorphism between the
leaves. However, the compactness of $M_\vartheta$ implies, at least,
that such a diffeomorphism is quasi-conformal. In particular, all
the leaves have the same (conformal) universal covering: either they
are all parabolic, or all hyperbolic. For our purposes, it is
sufficient to prove that the leaves are parabolic: since they are
abelian coverings of $\Sigma_g$, this implies that $g=1$, and the
rest of the statement is standard \cite[I.4]{God}.

We can associate to ${\mathcal F}_\vartheta$ a closed positive
current $\Phi\in A^{1,1}(S)'$, by integration along the leaves
against the transverse measure defined by $\beta$ \cite{Ghy}: if
$\eta\in A^{1,1}(S)$, we define
$$\Phi (\eta )= \int_{M_\vartheta} \beta\wedge\eta .$$
Obviously this current does not charge compact complex curves, hence
by results of Lamari \cite{Lam} \cite[Rem. 8]{Tom} it is an {\it
exact} positive current. As a consequence of this, its De Rham
cohomology class $[\Phi ]$ has vanishing product with the Chern
class of $T_{\mathcal F}$:
$$c_1(T_{\mathcal F})\cdot [\Phi ]=0.$$

Let us show that this implies the parabolicity of the leaves (this
is a particularly simple instance of the foliated Gauss-Bonnet
theorem, see \cite{Ghy}). In the opposite case, we may put on the
leaves of ${\mathcal F}_\vartheta$ their Poincar\'e metric, which
can be seen as a hermitian metric on $T_{\mathcal
F}\vert_{M_\vartheta}$. It is a continuous metric \cite{Ghy}, and it
can be regularized by a smooth hermitian metric on $T_{\mathcal
F}\vert_{M_\vartheta}$ whose curvature along the leaves is still
strictly negative (for instance, with the help of the above vector
field $v$). We then extend this hermitian metric to the full
$T_{\mathcal F}$, on the full $S$, in any way. The curvature form
$\Theta\in A^{1,1}(S)$ clearly satisfies
$$\Phi (\Theta ) = \int_{M_\vartheta}\beta\wedge\Theta < 0.$$
This is in contradiction with the vanishing of $c_1(T_{\mathcal
F})\cdot [\Phi ]$.
\end{proof}

\begin{remark} {\rm
Let us stress a subtle detail of the previous proof. The current
$\Phi$ can be also considered as a current on the real threefold
$M_\vartheta$. As such, however, it is {\it not} exact. Thus, in
order to get the vanishing of $c_1(T_{\mathcal F})\cdot [\Phi ]$, we
used also the fact that the tangent bundle $T_{{\mathcal
F}_\vartheta}$ extends to the full $S$, or more precisely that its
Chern class in $H^2(M_\vartheta ,{\mathbb R})$ extends to $S$, which
is obvious in our case since we have a global foliation on $S$. Now,
one can imagine a more general situation, in which we have a
Levi-flat hypersurface $M$ in a class ${\rm VII}_\circ$ surface,
such that the Levi foliation is given by a closed 1-form (or, more
generally, admits a transverse measure invariant by holonomy). Is it
still true that the leaves of this Levi foliation are parabolic?}
\end{remark}

\section{The structure of the singularities}

In order to study ${\rm Sing}({\mathcal F})$ and ${\rm Z}(\omega )$,
we need a general lemma on critical points.

Let $\widetilde U$ be a smooth complex surface and let $D\subset
\widetilde U$ be a compact connected curve (with possibly several
irreducible components). Suppose that the intersection form on $D$
is negative definite, so that $D$ is contractible to one point
\cite[p. 72]{BPV}. After contraction, we get a normal surface $U$
and a point $q\in U$, image of $D$; we do not exclude that $q$ is a
smooth point. Let now $\widetilde H$ be a holomorphic function on
$\widetilde U$, vanishing on $D$, such that
$${\rm Crit}(\widetilde H)=D.$$
After contraction, we thus get a holomorphic function $H$ on $U$
with (at most) an isolated critical point at $q$. If $B$ is a small
ball centered at $q$, then $H_0 = \{ H=0\}\cap B$ is a collection of
$k$ discs $H_0^1,\ldots ,H_0^k$ passing through $q$, whereas
$H_\varepsilon = \{ H=\varepsilon\}\cap B$ ($\varepsilon$ small and
not zero) is a connected curve with $k$ boundary components. The
topological type of $H_\varepsilon$ does not depend on $\varepsilon$
(small and not zero), it is the so-called Milnor fiber of $H$ at
$q$.

\begin{lemma} \label{genuszero}
Under the previous notation, suppose that the genus of the Milnor
fiber of $H$ at $q$ is zero. Then:
\begin{enumerate}
\item[(1)] $q$ is a smooth point of $U$;
\item[(2)] either $q$ is a regular point for $H$, or it is a critical
point of Morse type.
\end{enumerate}
\end{lemma}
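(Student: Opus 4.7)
My plan is to address the two conclusions separately: conclusion (2), assuming (1), is essentially immediate from classical plane-curve singularity theory, while the bulk of the work is in establishing (1) via an Euler-characteristic computation on a log resolution using A'Campo's formula.

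For (2), suppose $q$ is smooth. Then $H$ is a germ of holomorphic function on $(\mathbb{C}^2, 0)$ with at most an isolated critical point. The classical identities
$$\chi (F)= 1-\mu ,\qquad g_F=\delta -k+1,\qquad \mu =2\delta -k+1$$
(where $\mu$ is the Milnor number and $\delta$ the delta-invariant of $\{H=0\}$), combined with $\delta\geq\binom{k}{2}$ (equality iff the germ has $k$ smooth pairwise transverse branches), force $\delta =k-1$ under $g_F=0$, hence $\binom{k}{2}\leq k-1$. This gives $k\leq 2$, and then $\mu =k-1\leq 1$: either $H$ is a submersion at $q$ ($\mu =0$), or a holomorphic Morse function at $q$ ($\mu =1$).

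For (1), I argue by contradiction: suppose $q$ is singular. After further blow-ups of $\widetilde U$, I may assume $\widetilde H^{-1}(0)$ is a normal crossings divisor, $\widetilde H^{-1}(0) = \sum_i n_iD_i + \sum_j C_j$, with exceptional components $D_i$ and proper transforms $C_j$ of the local branches of $\{H=0\}$ at $q$. The hypothesis ${\rm Crit}(\widetilde H)=D$ forces $n_i\geq 2$ for all $i$; each $C_j$ has multiplicity $1$, for otherwise $C_j\subset {\rm Crit}(\widetilde H)$, contradicting $C_j\not\subset D$. Principality of the divisor gives $A\vec n=-\vec c$, where $A=(D_i\cdot D_j)$ is the negative-definite intersection matrix of $D$ and $\vec c=(c_i)$ counts branches meeting each $D_i$. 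A'Campo's formula, together with $\chi (C_j^\circ )=0$ (each $C_j^\circ$ is a once-punctured disc), yields $\chi (F)=\sum_i n_i\chi (D_i^\circ )$. In the key case where $D$ is a tree of smooth rational components, $\sum_i (2-d_i-c_i)=2-k$, so the genus-zero condition $\chi (F)=2-k$ reduces, after substituting $c_i=-(A\vec n)_i$, to
$$T\;:=\;\sum_i m_i(2+D_i^2)+\vec m^{\,T}A\vec m \; =\; 0,\qquad m_i:=n_i-1\geq 1.$$
On the minimal resolution of a singular $q$, every exceptional component satisfies $D_i^2\leq -2$, so $\sum_i m_i(2+D_i^2)\leq 0$; combined with $\vec m^{\,T}A\vec m<0$ by negative-definiteness, this yields $T<0$, contradicting $T=0$. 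Hence $q$ must be smooth.

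The main obstacle I anticipate is the transit between the minimal resolution (where the sign of $T$ is clean, every exceptional self-intersection being $\leq -2$) and the normal crossings log resolution (where A'Campo's formula is directly formulated). The quantity $T=-2g_F$ is resolution-independent, but additional blow-ups performed to achieve NC can introduce $(-1)$-curves which contribute positively to the displayed formula, so the sign must be tracked carefully---ideally by a direct verification that $T$ computed from any NC log resolution agrees with the value given by the minimal resolution data. A related subtlety is extending the argument to non-rational singularities, where $D$ may have positive-genus components, possibly with $D_i^2=-1$ (minimality only excludes rational $(-1)$-curves); in that case the right-hand side of the genus-zero identity picks up an extra $2\sum n_ig_i$ and the simple sign argument must be refined using adjunction.
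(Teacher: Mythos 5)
Your route is genuinely different from the paper's. The paper glues $k$ bidiscs to the Milnor tube $W$ so as to compactify the fibers into rational curves, obtaining a proper fibration $G:V\to{\mathbb D}(r)$ with generic fiber ${\mathbb C}P^1$; the structure theory of such fibrations (every fiber is an iterated blow-up of a regular one, and blowing up a point on a component of multiplicity $m$ creates a component of multiplicity $\ge m$) then forces $D$ --- all of whose components have multiplicity $\ge 2$, while the strict transforms of the branches have multiplicity $1$ --- to contain a $(-1)$-curve and ultimately to contract to a smooth point; part (2) then falls out because a normal crossings fiber dominating a regular ${\mathbb C}P^1$ can have at most two components through any point. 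Your part (2), via Milnor's formulas and $\delta\ge\binom{k}{2}$, is correct and complete, and your identity $T=\sum_i m_i(2+D_i^2)+\vec m^{\,T}A\vec m=-2g_F$ does hold on a normal crossings log resolution whose exceptional divisor is a tree of smooth rational curves.

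But part (1) has two genuine gaps, both of which you flag and neither of which you close. First, the sign argument needs a resolution on which simultaneously (i) the total transform of $H$ is normal crossings, so that A'Campo's formula and the count $\sum_i(2-d_i-c_i)=2-k$ apply, and (ii) every exceptional component has $D_i^2\le -2$. These are incompatible in general: the minimal log resolution typically contains $(-1)$-curves, while on the minimal resolution of $(U,q)$ the divisor $H^{-1}(0)$ is typically not normal crossings (tangencies, multiple intersections, triple points), so neither A'Campo nor $\sum_i d_i=2(N-1)$ is available there. Your proposed fix --- resolution-independence of $T$ --- is true (note $T=Z_m^2-K\cdot Z_m$ with $Z_m=\sum m_iD_i$ by adjunction, and one can check invariance under a single blow-up), but the verification, especially at non-nodal points of the total transform, is exactly where the remaining work lies and is absent from your write-up. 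Second, the lemma does not assume $D$ is a tree of smooth rational curves: a negative definite connected $D$ may a priori have positive-genus components or cycles in its dual graph, in which case minimality excludes only \emph{rational} $(-1)$-curves, $2+D_i^2\le 0$ can fail even on the minimal resolution, and the correction terms $-2\sum_i n_ig_i-2b_1(\Gamma)$ must actually be carried through rather than gestured at. (For the application in Proposition \ref{singular} the rational-tree case suffices, since Nakamura's results are invoked there; but the lemma as stated, and as proved in the paper --- where rationality of all components is a consequence of the rational fibration rather than a hypothesis --- is general.)
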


\begin{proof}
The hypothesis means that the Milnor fiber is a sphere with $k$
holes. By a standard construction, we may glue to
$W=\cup_{|\varepsilon | < r} H_\varepsilon$ ($r>0$ small) a
collection of $k$ bidiscs in such a way that we obtain a normal
complex surface $V$ and a proper holomorphic map
$$G : V\longrightarrow {\mathbb D}(r)$$
such that:
\begin{enumerate}
\item[(i)] $W\subset V$ and $G\vert_W=H$;
\item[(ii)] $G_\varepsilon = G^{-1}(\varepsilon )$ is a smooth
rational curve for every nonzero $\varepsilon\in{\mathbb D}(r)$;
\item[(iii)] $G_0 = G^{-1}(0)$ is a collection of $k$ rational
curves $G_0^1,\ldots ,G_0^k$ passing through $q$, with $G_0^j\cap W
=H_0^j$ for every $j$.
\end{enumerate}

Remark that all the components $G_0^j$ of $G_0$ have multiplicity
$1$, i.e. $G$ vanishes along $G_0^j\setminus\{ q\}$ at first order
only. On the other hand, we may blow-up $q$ to the original $D$, and
we get a smooth complex surface $\widetilde V$ and a map
$$\widetilde G : \widetilde V \longrightarrow {\mathbb D}(r)$$
whose fiber over $0$ is $\widehat G_0^1 \cup\ldots\cup\widehat G_0^k
\cup D$, with $\widehat G_0^j$ the strict transform of $G_0^j$. By
construction, we have $${\rm mult}(\widehat G_0^j)=1$$ for every $j$
and $${\rm mult}(C)\ge 2$$ for every irreducible component $C$ of
$D$, since ${\rm Crit}(\widetilde H)=D$.

Recall now \cite[p. 142]{BPV} that such a $\widetilde V$ can be also
blow-down to the trivial fibration ${\mathbb D}(r)\times{\mathbb
C}P^1$, in such a way that the singular fiber of $\widetilde G$ is
sent to the regular fiber $\{ 0\}\times{\mathbb C}P^1$. In other
words, that singular fiber is obtained from a regular fiber by a
sequence of monoidal transformations. It is then easy to see that
$D$ necessarily contains a $(-1)$-curve: the reason is that a
monoidal transformation at a point belonging to an irreducible
component of multiplicity $m$ creates a new irreducible component
whose multiplicity will be not less than $m$. By iterating this
principle, we see that $D$ contracts to a regular point, whence the
first part of the lemma.

Moreover, after this contraction the singular fiber becomes a curve
(the fiber $G_0$ in the now smooth surface $V$) still dominating a
regular fiber, hence in particular it has only normal crossings.
Since all the components of $G_0$ pass through $q$, we get $k=1$
($G_0$ is a single smooth rational curve of selfintersection $0$) or
$k=2$ ($G_0$ is a pair of two smooth rational curves of
selfintersection $-1$). In the first case $q$ is a regular point,
and in the second case it is a Morse type critical point.
\end{proof}

\begin{remark} {\rm
If $H:{\mathbb C}^2\to {\mathbb C}$ has an isolated critical point
whose Milnor fiber has genus zero, the the critical point is of
Morse type: it is a particular case of the previous lemma, but it is
also a consequence of classical formulae estimating the genus of the
Milnor fiber. However, some care is needed when ${\mathbb C}^2$ is
replaced by a singular surface. For instance, take the function $zw$
on ${\mathbb C}^2$ and quotient by the involution $(z,w)\mapsto
(-z,-w)$. We get a normal surface $U$ and a holomorphic function $H$
on $U$ with an isolated critical point whose Milnor fiber has genus
zero. This kind of examples (and more complicated ones) do not
appear in Lemma \ref{genuszero} because, when we take the resolution
$\widetilde U\to U$, the critical set of $\widetilde H$ is not the
{\it full} exceptional divisor $D$.}
\end{remark}

We can now return to our compact complex surface $S$.

\begin{proposition}\label{singular}
The zero set ${\rm Z}(\omega )$ is composed only by isolated points,
all of Morse type. In particular, the normal bundle $N_{\mathcal F}$
coincides with the flat line bundle $L$.
\end{proposition}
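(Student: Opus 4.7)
My approach is to apply Lemma~\ref{genuszero} (and the $\mathbb C^2$-version recalled in the Remark following it) to a local holomorphic primitive of $\omega$, with the crucial genus-zero input on the Milnor fiber furnished by Proposition~\ref{regular}; the minimality of $S$ will then rule out curves in ${\rm Z}(\omega)$.

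First I would set up the local primitive. Suppose, for contradiction, that $D$ is a connected compact curve in ${\rm Z}(\omega)$, and lift it to $\widetilde D\subset\widetilde S$. Since $\omega=\partial F$ vanishes on $\widetilde D$ and $F$ is real pluriharmonic, $dF=\omega+\bar\omega$ vanishes there, so $F\equiv c$ on $\widetilde D$. The form $\omega$ is closed holomorphic ($\bar\partial\omega=-\partial\bar\partial F=0$), so on a contractible tubular neighbourhood $\widetilde U$ of $\widetilde D$ one has $\omega=dh$ for a holomorphic $h$ with $h|_{\widetilde D}=0$ and ${\rm Crit}(h)=\widetilde D$. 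Matching types forces $F=2\,{\rm Re}(h)+c$. Because $\pi$ is a local diffeomorphism, $h$ may be regarded as a holomorphic function on a neighbourhood $U\supset D$ in $S$.

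Next I would compute the Milnor fiber. For $\epsilon\in\mathbb R\setminus\{0\}$ small, the identity $F=2\,{\rm Re}(h)+c$ shows that $h^{-1}(\epsilon)$ lies in a single regular fiber $M_{\theta'}$ of $f$, with $\theta'$ distinct from the critical value $\theta_j=f(D)$. Thus $h^{-1}(\epsilon)$ is a union of pieces of leaves of $\mathcal F\vert_{M_{\theta'}}$, and Proposition~\ref{regular} says each such leaf is biholomorphic to $\mathbb C$ or $\mathbb C^*$, so every compact piece has genus zero. In particular the Milnor fiber obtained after contracting $D$ to a point has genus zero. Since compact curves on a class $\rm VII_\circ$ surface have negative definite intersection form, $D$ is contractible and Lemma~\ref{genuszero} applies: its proof then forces $D$ to contain a $(-1)$-curve, contradicting the minimality of $S$. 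Hence ${\rm Z}(\omega)$ contains no curves.

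For the isolated zeros, I would invoke the $\mathbb C^2$ version from the Remark after Lemma~\ref{genuszero}: at each isolated $p\in{\rm Z}(\omega)$ the local primitive $h$ has an isolated critical point with genus-zero Milnor fiber by the same argument, so $p$ is of Morse type. The formula $N_{\mathcal F}=L\otimes\mathcal O(-\sum m_jC_j)$ then collapses to $N_{\mathcal F}=L$. The main obstacle I expect is the clean identification of the Milnor fiber with a piece of a single genus-zero leaf: one must take $\epsilon$ \emph{real} so that $h^{-1}(\epsilon)$ avoids the critical fiber $M_{\theta_j}$, and then rely essentially on Proposition~\ref{regular} to control leaf topology in the nearby regular fibers. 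This foliation-theoretic input—not the local Milnor argument—is the true crux.
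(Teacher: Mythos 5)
Your proposal follows essentially the same route as the paper: reduce to Lemma~\ref{genuszero} by producing a local holomorphic primitive of $\omega$ near a connected component $D$ of ${\rm Z}(\omega)$, verify the genus-zero hypothesis by observing that the Milnor fiber sits inside a leaf of ${\mathcal F}\vert_{M_{\vartheta'}}$ for a nearby regular fiber (where Proposition~\ref{regular} gives leaves $\cong{\mathbb C}$ or ${\mathbb C}^*$), and then contradict minimality; the Morse-type statement at isolated zeros is obtained the same way. Your care in taking $\varepsilon$ real so that $h^{-1}(\varepsilon)$ lands in a level set of $F$ is exactly the point the paper passes over with ``it follows obviously''.

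There is, however, one step that fails as stated: the claim that ``compact curves on a class ${\rm VII}_\circ$ surface have negative definite intersection form''. This is false in general --- elliptic curves on Hopf surfaces and cycles of rational curves on Enoki or Inoue--Hirzebruch surfaces have self-intersection $0$, so the intersection form on a curve configuration in a class ${\rm VII}_\circ$ surface is a priori only negative \emph{semi}-definite. Negative definiteness (equivalently, contractibility of $D$) must be earned: the paper does so by invoking Nakamura's classification of curve configurations on ${\rm VII}_\circ$ surfaces together with the absence of elliptic curves and of cycles of rational curves established in \cite{Tom} and \cite{C-T} for the surfaces at hand, concluding that $D$ is a tree of rational curves. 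This also repairs a second, smaller circularity in your setup: you take a ``contractible tubular neighbourhood'' of $\widetilde D$, which presupposes $D$ simply connected (though exactness of $\omega$ near $D$ could alternatively be deduced from the vanishing of $\omega$ along $D$, which kills its periods). With the appeal to Nakamura and Toma--Chiose--Toma inserted, your argument matches the paper's.
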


\begin{proof}
Let $D$ be a connected component of ${\rm Z}(\omega )$. If it is a
curve, then it is a tree of rational curves with negative definite
intersection form: this follows from results of Nakamura on the
possible configurations of curves on ${\rm VII}_\circ$ surfaces
\cite{Nak}, and the absence of elliptic curves and cycles of
rational curves \cite{Tom} \cite{C-T}. In particular, $D$ is simply
connected, and so the (twisted) closed 1-form $\omega$ is exact on a
neighbourhood $\widetilde U$ of $D$: $\omega = d\widetilde H$ and
${\rm Crit}(\widetilde H)=D$. We therefore are in the setting of
Lemma \ref{genuszero}, and we have just to verify the genus zero
hypothesis.

Now, $D$ is contained in a singular fiber $M_{\vartheta_0}$, which
can be approximated by regular ones, on which we already know that
the foliation has leaves ${\mathbb C}$ or ${\mathbb C}^*$. It
follows obviously that the Milnor fiber has genus zero, and so by
Lemma \ref{genuszero} the contraction of $D$ produces a smooth
point. But we are also assuming since the beginning that $S$ is
minimal, hence such a contraction cannot exist and so ${\rm
Z}(\omega )$ is composed only by isolated points. By a similar
argument, and again Lemma \ref{genuszero}, all these points are of
Morse type.
\end{proof}

\section{The planar case}

Let ${\rm Cv}(f)$ denote the critical values of $f: S\to {\mathbb
S}^1$, and let $J$ be a connected component of ${\mathbb S}^1
\setminus{\rm Cv}(f)$. On $f^{-1}(J)$, the foliation ${\mathcal F}$
is nonsingular, and it is given by the transverse intersection of
the Kernels of the closed 1-form $df=dF/F$ and the integrable 1-form
$d^cF/F$. It follows that the differentiable type of ${\mathcal
F}\vert_{M_\vartheta}$ does not depend on $\vartheta\in J$. We shall
say that $J$ is of type ${\mathbb C}$ (resp. type ${\mathbb C}^*$)
if the leaves of ${\mathcal F}$ on $f^{-1}(J)$ are isomorphic to
${\mathbb C}$ (resp. ${\mathbb C}^*$), according with Proposition
\ref{regular}. In this section we shall prove that the existence of
a component $J$ of type ${\mathbb C}$ implies that $S$ is a Inoue
surface of type $S_M$.

Let us firstly recast Proposition \ref{singular} in the context of
uniformisation of foliations \cite{Br2}. Recall that the leaves of a
singular foliation are {\it not} simply the leaves outside the
singular points: generally speaking, and in order to get a workable
definition, we need to compactify some separatrices, the so-called
vanishing ends \cite[p. 732]{Br2}. However, the presence of a
vanishing end implies the existence of a rational curve on the
surface, invariant by the foliation, and over which the tangent
bundle of the foliation has strictly positive degree \cite[p.
733]{Br2}. We claim that this cannot happen in our context, and so
the leaves of our ${\mathcal F}$ are just equal to the leaves of
${\mathcal F}\vert_{S\setminus{\rm Sing}({\mathcal F})}$.

Indeed, if $C\subset S$ is a rational curve, invariant by ${\mathcal
F}$, then $c_1(T_{\mathcal F})\cdot C = 2 - {\rm Z}({\mathcal
F},C)$, where ${\rm Z}({\mathcal F},C)$ is the number of the
singularities of ${\mathcal F}$ along $C$ \cite{Br1}. On the other
hand, since these singularities are all of Morse type (${\rm CS}$
residue equal to $-1$), we also have $C^2 = -{\rm Z}({\mathcal
F},C)$. Hence
$$c_1(T_{\mathcal F})\cdot C = 2 + C^2\le 0$$
since, by minimality, $C^2\le -2$.

We shall also use the main result of \cite{Br2}, which says that,
since $S$ is not a Hopf surface nor a Kato surface, the foliation
${\mathcal F}$ is {\it uniformisable}, i.e. does not have vanishing
cycles (the reader is invited to give a simple proof of this result,
in our very special context).

\begin{proposition} \label{planar}
Let $J\subset{\mathbb S}^1\setminus{\rm Cv}(f)$ be a component of
type ${\mathbb C}$. Then $J={\mathbb S}^1$, and $S$ is a Inoue
surface of type $S_M$.
\end{proposition}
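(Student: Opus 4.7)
The plan is to prove both parts of the proposition separately: first that $J = \mathbb{S}^1$ (i.e.\ $f$ has no critical values), and then that the resulting global structure forces $S$ to be of type $S_M$.

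For the first and main step, I would argue by contradiction. Suppose $\vartheta_0 \in \partial J$ is a critical value. By Proposition \ref{singular}, $M_{\vartheta_0}$ contains a Morse singularity $p$ of $\mathcal{F}$, and I would fix local holomorphic coordinates $(z,w)$ near $p$ in which $\omega = d(zw)$, so that $F = \mathrm{Re}(zw) + \vartheta_0$ and the local plaques of $\mathcal{F}$ are the curves $\{zw = w_0\}$. For $\vartheta \in J$ close to $\vartheta_0$, the local plaques on $M_\vartheta$ (indexed by $w_0$ with $\mathrm{Re}(w_0) = \vartheta - \vartheta_0$) are annuli whose inner radii tend to $0$ as $\vartheta \to \vartheta_0$. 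The inner boundary of such an annular plaque is a loop $\gamma_\vartheta$ on the global leaf $L_\vartheta$ containing the plaque; since $J$ is of type $\mathbb{C}$, $L_\vartheta \cong \mathbb{C}$ is simply connected, so $\gamma_\vartheta$ bounds an embedded disc $D_\vartheta$ on $L_\vartheta$. As $\vartheta \to \vartheta_0$ along $J$, the loops $\gamma_\vartheta$ contract onto the singular point $p$. Using the flow of a vector field transverse to the fibers of $f$ (analogous to the $v$ in the proof of Proposition \ref{regular}) to transport the discs $D_\vartheta$ coherently between nearby fibers, one obtains a family of embedded discs on parabolic leaves whose boundaries shrink to a singularity of $\mathcal{F}$. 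This is exactly a vanishing cycle of $\mathcal{F}$ in the sense of \cite{Br2}, contradicting the uniformisability of $\mathcal{F}$ established just before the statement of the proposition. Therefore $\partial J \cap \mathrm{Cv}(f) = \emptyset$, and $J = \mathbb{S}^1$.

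With $J = \mathbb{S}^1$, the map $f : S \to \mathbb{S}^1$ is a smooth submersion, all of whose fibers are diffeomorphic to $\mathbb{T}^3$ by Proposition \ref{regular}. Ehresmann's theorem makes $S$ a $\mathbb{T}^3$-bundle over $\mathbb{S}^1$, so $c_2(S) = \chi(S) = 0$. By the reduction described in Section 1 (Noether's formula, Kodaira's criterion, and Inoue's classification \cite{Ino}), $S$ is an Inoue surface. It remains to isolate the type $S_M$: among Inoue surfaces only $S_M$ carries a holomorphic foliation whose generic leaf is $\mathbb{C}$, while the families $S_{N,p,q,r;t}^{(+)}$ and $S_{N,p,q,r}^{(-)}$ have foliations by $\mathbb{C}^*$, matching the second case of Proposition \ref{regular}. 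Hence $S \cong S_M$.

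The principal obstacle is the vanishing-cycle argument of the first step: one must show that the naive topological picture (annular plaques shrinking onto a Morse singularity, each bounded by a disc on the surrounding $\mathbb{C}$-leaf) genuinely produces a vanishing cycle as in \cite{Br2}. This requires choosing the bounding discs $D_\vartheta$ coherently under the transverse flow and controlling their geometry in the limit $\vartheta \to \vartheta_0$, so that the limiting configuration is recognised as a vanishing cycle at $p$ rather than some less rigid degeneration.
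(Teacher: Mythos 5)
Your overall strategy (argue by contradiction at a boundary fiber of $f^{-1}(J)$, invoke the absence of vanishing cycles, then conclude via the $\mathbb{T}^3$-bundle structure) is in the spirit of the paper, and your second half is correct and matches it. But the core of your first step does not work as stated. A vanishing cycle in the sense of \cite{Br2} is a loop that is \emph{essential on its own leaf} yet becomes null-homotopic when pushed to nearby leaves. The configuration you describe --- loops $\gamma_\vartheta$ on leaves $L_\vartheta\cong\mathbb{C}$ bounding discs $D_\vartheta$ and shrinking onto the Morse point $p$ --- consists entirely of loops that are already null-homotopic on their own leaves; it is perfectly compatible with uniformisability and yields no contradiction. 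No amount of ``coherent choice of the $D_\vartheta$'' will turn it into a vanishing cycle, so the obstacle you flag at the end is not the real issue.

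The missing idea is to look at the \emph{limit} leaf rather than the approximating ones. Let $\mathcal{L}$ be the leaf in $M_{\vartheta_0}$ given by a separatrix of $p$, and let $\gamma\subset\mathcal{L}$ be a small loop around $p$; it has trivial holonomy and deforms to loops $\gamma'$ on the nearby simply connected leaves $\mathcal{L}'\cong\mathbb{C}$. The absence of vanishing cycles is used here \emph{positively}: since $\gamma'$ is null-homotopic in $\mathcal{L}'$, the loop $\gamma$ must be null-homotopic in $\mathcal{L}$. Hence $\mathcal{L}$ is simply connected, and $\mathcal{L}\cup\{p\}$ closes up to a rational curve $C$ invariant by $\mathcal{F}$ containing a single singularity. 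The contradiction then comes from \emph{minimality}, via the index formulae recalled just before the proposition: $c_1(T_{\mathcal F})\cdot C = 2-\mathrm{Z}(\mathcal{F},C)$ and, since Morse points have Camacho--Sad residue $-1$, $C^2=-\mathrm{Z}(\mathcal{F},C)=-1$, which is impossible on the minimal model. This passage from ``no vanishing cycle'' to a $(-1)$-curve is the step your argument lacks, and without it the contradiction never materialises.
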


\begin{proof}
Suppose, by contradiction, that $J\not= {\mathbb S}^1$, and let
$M_{\vartheta_0}$ be a fiber in the boundary of $f^{-1}(J)$. Such a
fiber must contain a singular point $p\in {\rm Sing}({\mathcal F})$.
Let ${\mathcal L}$ be the leaf corresponding to one of the two
separatrices of $p$, and let $\gamma\subset{\mathcal L}$ be a cycle
close to $p$ and turning around it. This cycle, which has no
holonomy, can be slightly deformed to a cycle $\gamma
'\subset{\mathcal L}'$, where ${\mathcal L}'$ is a leaf contained in
$M_\vartheta$, $\vartheta\in J$ close to $\vartheta_0$. But
${\mathcal L}'$ is simply connected, hence $\gamma '$ is homotopic
to zero in ${\mathcal L}'$ and so $\gamma$ is homotopic to zero in
${\mathcal L}$, by the absence of vanishing cycles.

It follows that ${\mathcal L}$ is isomorphic to ${\mathbb C}$, and
${\mathcal L}\cup\{ p\}$ is a rational curve $C$ which contains a
unique singularity of the foliation. By the formulae above,
$C^2=-1$, contradicting the minimality of $S$.

Therefore $J={\mathbb S}^1$, and so $S$ is a ${\mathbb T}^3$-bundle
over the circle, without singularities. By \cite{Ino}, it is a Inoue
surface of type $S_M$.
\end{proof}

\begin{remark}\label{stein} {\rm
It is worth observing that, in our quite special context, the proof
of Inoue's theorem can be highly simplified. Indeed, and by our
previous results, on the universal covering $\widehat S$ of $S$ the
foliation is given by a submersion $\pi :\widehat S \to {\mathbb H}$
(with ${\rm Im}(\pi )$ coming from the pluriharmonic function $F$ on
$\widetilde S$) all of whose fibers are isomorphic to ${\mathbb C}$.
The key point is to prove that such a universal covering is a
product:
$$\widehat S = {\mathbb H}\times{\mathbb C} .$$
Indeed, once we know this fact, it remains to study the action of
$\Gamma = \pi_1(S)$ on ${\mathbb H}\times{\mathbb C}$. But we
already know a lot of properties of such an action (for instance,
$\Gamma$ is a semidirect product of ${\mathbb Z}$ and ${\mathbb
Z}^3$, which acts on the ${\mathbb H}$-factor in a special affine
way, etc.), and using that knowledge it is easy to conclude that $S$
is a Inoue surface of type $S_M$.

In order to prove that $\widehat S$ is a product, it is sufficient
to show that $\pi$ is a locally trivial fibration, i.e. that every
$z\in{\mathbb H}$ has a neighbourhood $U_z$ such that $\pi^{-1}(U_z)
= U_z\times{\mathbb C}$. By a classical theorem of Nishino
\cite{Nis}, this is equivalent to show that $V_z = \pi^{-1}(U_z)$ is
Stein. By an argument of Ohsawa \cite{Ohs}, the Steinness of $V_z$
follows from the existence of a smooth (not holomorphic!) foliation
${\mathcal H}$ on $V_z$ whose leaves are holomorphic sections of
$\pi$ over $U_z$ (i.e., $V_z$ is trivialisable by a smooth foliation
with holomorphic leaves).

Now, in our case such a foliation ${\mathcal H}$ is easy to
construct. On any fiber $M_\vartheta$ we can take a real analytic
foliation by real curves transverse to ${\mathcal
F}\vert_{M_\vartheta}$. By complexifying, we get, on a neighbourhood
of $M_\vartheta$, a real analytic foliation by complex curves,
transverse to ${\mathcal F}$. Using the special form of ${\mathcal
F}$, it is easy to see that this foliation, lifted to $\widehat S$,
as the required property (here $U_z$ is an horizontal strip in
${\mathbb H}$).   }
\end{remark}

\section{The cylindrical case}

Assume now that every $J\subset{\mathbb S}^1\setminus{\rm Cv}(f)$ is
of type ${\mathbb C}^*$.

\begin{lemma}\label{leaves}
Every leaf of ${\mathcal F}$ is isomorphic to ${\mathbb C}^*$.
\end{lemma}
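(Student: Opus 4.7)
The plan is as follows. Every leaf of $\mathcal F$ is contained in a fiber of $f$; for leaves lying in a regular fiber, the cylindrical assumption together with Proposition \ref{regular} immediately gives $\mathcal L\cong{\mathbb C}^*$, so I only need to treat leaves contained in a singular fiber $M_{\vartheta_0}$. I will combine the uniformisability of $\mathcal F$ (recalled just above the statement) with the same minimality argument used in Proposition \ref{planar}.

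First I would note that, $\mathcal F$ being uniformisable with generic leaf ${\mathbb C}^*$ (universal cover ${\mathbb C}$), every leaf has universal cover ${\mathbb C}$, and is therefore conformally one of ${\mathbb C}$, ${\mathbb C}^*$, or a compact complex torus; the last possibility is excluded by the absence of elliptic curves on $S$ used already in Proposition \ref{singular}. So it is enough to rule out $\mathcal L\cong{\mathbb C}$. Assuming such an $\mathcal L$ exists, I would split into two subcases. If some end of $\mathcal L$ limits to a Morse singularity $p\in{\rm Sing}(\mathcal F)$, then, since $\mathcal L\cong{\mathbb C}$ has a single end, that end must be at $p$, and the compactification $C=\mathcal L\cup\{p\}$ is a rational curve invariant by $\mathcal F$ and carrying exactly one (Morse-type) singularity of $\mathcal F$. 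Substituting ${\rm Z}(\mathcal F,C)=1$ into the formulae $c_1(T_{\mathcal F})\cdot C=2-{\rm Z}(\mathcal F,C)$ and $C^2=-{\rm Z}(\mathcal F,C)$ already recorded in Section 4 would give $C^2=-1$, contradicting the minimality of $S$, exactly as in Proposition \ref{planar}.

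In the remaining subcase no end of $\mathcal L$ limits to a Morse singularity, so $\mathcal L$ can be transversally perturbed to nearby leaves lying in neighbouring regular fibers of $f$ without crossing any singular leaf (the critical values of the local potential $h$ with $\omega=dh$ are isolated, and $h(\mathcal L)$ is not among them under this hypothesis). Picking a nearby ${\mathbb C}^*$-leaf $\mathcal L'$ and a generator $\gamma'$ of $\pi_1(\mathcal L')\cong{\mathbb Z}$, I would transport $\gamma'$ through the foliation to a loop $\gamma\subset\mathcal L$: this is legitimate because $\gamma'$ has trivial holonomy (the flat bundle $L$ has monodromy $\lambda$ only in the ${\mathbb S}^1$-direction of $f$, and loops inside leaves do not wind in that direction). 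But then $\gamma$ would be null-homotopic in $\mathcal L\cong{\mathbb C}$, making $\gamma'$ a vanishing cycle in the sense of \cite{Br2} and contradicting the uniformisability of $\mathcal F$. The delicate step will be this transport: one needs to guarantee that the path in the transverse parameter space joining a regular-fiber leaf to $\mathcal L$ avoids the critical values of $h$, and it is precisely the assumption that $\mathcal L$ is not a separatrix that makes this possible.
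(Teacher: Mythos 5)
Your first subcase (an end of $\mathcal L\cong{\mathbb C}$ converging to a Morse singularity gives a rational curve $C$ with one singularity, hence $C^2=-1$, against minimality) is exactly the paper's treatment of separatrices. But the two other steps have genuine gaps, and the first is serious. You assert at the outset that, $\mathcal F$ being uniformisable and the generic leaf being ${\mathbb C}^*$, \emph{every} leaf has universal cover ${\mathbb C}$. In this paper ``uniformisable'' means precisely ``without vanishing cycles'' (see the discussion before Proposition \ref{planar}); that is a purely topological property of the foliation and carries no information about the conformal type of individual leaves, so a dense family of parabolic leaves does not by itself force the remaining leaves to be parabolic. This is in fact the analytic heart of the lemma, and the paper proves it by hand: it first shows that every leaf of a singular fiber $M_{\vartheta_0}$ is \emph{topologically} a cylinder (separatrices by the minimality argument; every other leaf is dense, because the holonomy pseudogroup consists of translations, and a dense leaf accumulating on a cylindrical separatrix cannot be simply connected without producing a vanishing cycle), and then establishes parabolicity of each end directly: an end not converging to a singular point meets a small ball around a singular point in infinitely many essential annuli, among which infinitely many of uniformly bounded modulus can be extracted, and this forces the end to be parabolic. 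To bypass that argument you would have to invoke explicitly something like the plurisubharmonic variation of the leafwise Poincar\'e metric, a substantive external theorem that the paper deliberately does not use; as written, your step is an unjustified leap.

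Second, in your remaining subcase the vanishing-cycle argument runs in the wrong direction. A vanishing cycle is an essential loop in a leaf whose deformations into \emph{nearby} leaves become null-homotopic; you exhibit instead an essential loop $\gamma'$ in a nearby regular-fiber leaf whose transport into the \emph{limit} leaf $\mathcal L$ is null-homotopic, and that does not make $\gamma'$ a vanishing cycle. The step is repairable by a Novikov-type argument (along the interpolating family $\gamma_t$ the set of parameters where $\gamma_t$ is null-homotopic is open, so at its infimum one finds an essential loop approximated by null-homotopic ones), but you would need to say this, and you would also need to justify the global transport of $\gamma'$ down to $\mathcal L$. The paper avoids both difficulties by working entirely inside $M_{\vartheta_0}$ and pushing the essential core of a separatrix \emph{outward} into the accumulating, putatively simply connected, leaf, which produces a vanishing cycle in the correct direction.
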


\begin{proof}
The same argument used in Proposition \ref{planar} shows that, if
${\mathcal L}$ is a leaf in a singular fiber $M_{\vartheta_0}$
corresponding to a separatrix of some singular point $p$, then
${\mathcal L}$ must be diffeomorphic to the cylinder ${\mathbb
R}\times{\mathbb S}^1$. Indeed, ${\mathcal L}$ cannot be simply
connected by the minimality of $S$, and cannot have a fundamental
group larger than ${\mathbb Z}$ by the absence of vanishing cycles
(and of holonomy).

Remark now that, since the foliation is defined by a closed 1-form,
every leaf in $M_{\vartheta_0}$ is either dense or properly embedded
in the complement of the singularities: this follows from the fact
that the holonomy pseudogroup of the foliation is composed only by
translations on ${\mathbb R}$. The second possibility occurs only
when the leaf is a ``double separatrix'', i.e. a cylinder with both
ends converging to singular points, in which case the leaf is
isomorphic to ${\mathbb C}^*$ and its closure is a rational curve of
selfintersection $-2$.

Consider now an arbitrary dense leaf ${\mathcal L}'$ in
$M_{\vartheta_0}$. By the absence of vanishing cycles (and of
holonomy), we get again that its fundamental group cannot be larger
than ${\mathbb Z}$. On the other hand, this leaf accumulates to the
separatrix ${\mathcal L}$, hence it cannot simply connected
otherwise ${\mathcal L}$ would be simply connected too. In
conclusion, we see that {\it every} leaf in $M_{\vartheta_0}$ is a
cylinder.

To find the conformal type of the leaves, observe that an end of a
leaf is either convergent to a singular point (in which case it is
obviously parabolic), or it intersects a small ball $B$ around a
singular point $p$ along infinitely many annuli. These annuli are
not homotopic to zero, by the previous considerations. Moreover, we
can extract among them infinitely many ones with bounded modulus
(i.e., all isomorphic to $\{ r<|z|<1\}$ with $r$ varying in a
compact subset of $(0,1)$). It follows that the end is parabolic,
and the leaf is isomorphic to ${\mathbb C}^*$.
\end{proof}

\begin{lemma}\label{bisection}
The line bundle $T_{\mathcal F}^{\otimes 2}$ admits a continuous
section on $S\setminus {\rm Sing}({\mathcal F})$ which is nowhere
vanishing.
\end{lemma}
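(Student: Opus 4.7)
Each leaf is isomorphic to $\mathbb{C}^*$ (Lemma \ref{leaves}), and on $\mathbb{C}^*$ the Euler field $X_0 = z\,\partial/\partial z$ is canonical up to sign: the dilations $z\mapsto az$ and the inversion $z\mapsto 1/z$ generate $\mathrm{Aut}(\mathbb{C}^*)$; the former preserve $X_0$ and the latter negates it. Consequently $X_0^{\otimes 2}$ is a canonically defined, nowhere-vanishing section of $(T\mathbb{C}^*)^{\otimes 2}$, and transporting it to each leaf produces a pointwise section $\sigma$ of $T_{\mathcal F}^{\otimes 2}$ on $S\setminus\mathrm{Sing}(\mathcal F)$. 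The substance of the lemma is the continuity of this pointwise $\sigma$.

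My plan is to realise $\sigma$, up to a fixed sign, as the tensor square of the $(1,0)$-part of an intrinsic real vector field $V$ generating the maximal compact subgroup $\mathbb{S}^1\subset\mathbb{C}^*$ on each leaf (normalized so the $\mathbb{S}^1$-action has period $2\pi$). A direct computation in any biholomorphism $L\cong\mathbb{C}^*$ gives $V^{(1,0)} = i X_L$, hence $(V^{(1,0)})^{\otimes 2} = -X_L^{\otimes 2}$; so $\sigma = -(V^{(1,0)})^{\otimes 2}$ globally, and it suffices to show that $V$ is a continuous, nowhere-zero real vector field on $S\setminus\mathrm{Sing}(\mathcal F)$. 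On each regular fiber $M_\vartheta$, Proposition \ref{regular} presents $M_\vartheta$ as an $\mathbb{S}^1$-bundle over $\mathbb{T}^2$, and the leafwise $\mathbb{S}^1$-orbits are exactly the $\mathbb{S}^1$-fibers; thus $V$ coincides with the vertical vector field of this bundle and is smooth on each $M_\vartheta$. Passing between neighbouring regular fibers preserves continuity because the definition of $V$ is entirely intrinsic to the $\mathbb{C}^*$-structure of each leaf, which varies continuously with the transverse holomorphic parameter.

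The main technical obstacle is continuity of $V$ across the singular fibers $M_{\vartheta_0}$: the $\mathbb{S}^1$-bundle structure of the nearby regular fibers must degenerate consistently with the canonical $\mathbb{S}^1$-subgroup of each $\mathbb{C}^*$-leaf in $M_{\vartheta_0}$. I would control this using the Morse-type description of $\mathrm{Sing}(\mathcal F)$ from Proposition \ref{singular} together with the absence of vanishing cycles that underlies Lemma \ref{leaves}: the $\mathbb{S}^1$-fibers of the nearby regular bundles form loops of uniformly bounded modulus around each Morse singularity, and by the no-vanishing-cycles property these loops converge to nontrivial cycles of the cylindrical leaves of $M_{\vartheta_0}$, which must then be the canonical $\mathbb{S}^1$-subgroups (the only generators, up to sign, of $\pi_1\cong\mathbb{Z}$). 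This matches the bundle vertical field with the intrinsic $V$ across the singular fiber and yields the required continuity.
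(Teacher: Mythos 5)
Your construction of the candidate section is the same as the paper's: on each leaf $\cong{\mathbb C}^*$ the field $z\,\partial/\partial z$ is canonical up to the sign coming from $z\mapsto 1/z$, and the sign disappears in the tensor square. The problem is that your argument for \emph{continuity} --- which is the entire content of the lemma --- does not go through. First, the identification of the intrinsic rotation field $V$ with the vertical vector field of the ${\mathbb S}^1$-bundle $M_\vartheta\to{\mathbb T}^2$ is unjustified: Proposition \ref{regular} gives only a differentiable bundle structure, and a smooth circle generating $\pi_1$ of a leaf biholomorphic to ${\mathbb C}^*$ need not be an orbit $\{|z|={\rm const}\}$ of the canonical circle action; only its free homotopy class is pinned down. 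The same objection defeats the step at the singular fibers, where you conclude that certain limit cycles ``must then be the canonical ${\mathbb S}^1$-subgroups'': a nontrivial generator of $\pi_1\cong{\mathbb Z}$ is not thereby an orbit of the rotation action. Second, and more fundamentally, the sentence ``the definition of $V$ is entirely intrinsic to the ${\mathbb C}^*$-structure of each leaf, which varies continuously with the transverse holomorphic parameter'' assumes what is to be proved: the leaves are noncompact, and it is not automatic that their normalized uniformizations $i_t:Q_T^{-1}(t)\to{\mathbb C}^*$ depend continuously on the transverse parameter --- a priori they could jump.

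The paper's proof supplies exactly the missing analytic input. One passes to the holonomy tube $Q_T:V_T\to T$ over a local transversal $T$, with its section $q_T$, and normalizes $i_t$ by $i_t(q_T(t))=1$ plus a choice of homotopy class to kill the residual ${\mathbb Z}_2$. Continuity of the canonical section is equivalent to continuity of the resulting trivializing map $u$. This is obtained by exhausting a central fiber by relatively compact domains $\Omega_n$, deforming each $\Omega_n$ holomorphically to nearby fibers (Royden's Lemma), and applying Koebe's distortion theorem: because the fibers are \emph{parabolic}, the distortion of $i_t$ on a fixed compact $K\subset\Omega_n$ tends to zero as $n\to\infty$, forcing $i_t|_K\to i_{t_0}|_K$ uniformly. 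Some argument of this kind (or an appeal to the continuity of the uniformization in families, as in Ghys or Nishino) is indispensable; without it your proof has a genuine gap at its central step.
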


\begin{proof}
The complex curve ${\mathbb C}^*$ admits a ``almost canonical''
holomorphic vector field: the vector field
$z\frac{\partial}{\partial z}$, which can be almost uniquely
characterized as a complete holomorphic vector field whose flow is
$2\pi {\rm i}$-periodic. There is however a minor ambiguity, since
also the vector field $-z\frac{\partial}{\partial z}$ (conjugate to
the previous one by the inversion $z\mapsto 1/z$, which exchanges
the two ends) is complete and $2\pi {\rm i}$-periodic. This
ambiguity can be removed when we take the square:
$(z\frac{\partial}{\partial z})^{\otimes 2} =
(-z\frac{\partial}{\partial z})^{\otimes 2}$. This means that, given
any foliation ${\mathcal F}$ with leaves isomorphic to ${\mathbb
C}^*$, we get a {\it canonical} nonvanishing section of $T_{\mathcal
F}^{\otimes 2}$ on $S\setminus {\rm Sing}({\mathcal F})$, by the
previous recipe. The point to be proved is that such a section is
(at least) continuous.

This is equivalent to prove the following. Let $T\subset S$ be a
local transversal to ${\mathcal F}$, isomorphic to a disc, and let
$V_T$ be the corresponding holonomy tube \cite[p. 734]{Br2}. It is a
complex surface, equipped with a submersion $Q_T : V_T\to T$, all of
whose fibers are isomorphic to ${\mathbb C}^*$, and a section
$q_T:T\to V_T$. For every $t\in T$ we have a unique isomorphism
$i_t$ from $Q_T^{-1}(t)$ to ${\mathbb C}^*$, sending $q_T(t)$ to $1$
(really, there is again a ${\mathbb Z}_2$-ambiguity, which however
can be easily removed by prescribing an homotopy class). Therefore
we get a trivialising map
$$u : V_T \longrightarrow T\times{\mathbb C}^*$$
$$u\vert_{Q_T^{-1}(t)} = (t,i_t)$$
and the continuity of the above canonical section of $T_{\mathcal
F}^{\otimes 2}$ is clearly equivalent to the continuity of $u$ (for
every transversal $T$).

As shown in \cite[p. 78]{Ghy} (see also \cite[I.2]{Nis}), the
continuity of $u$ readily follows from Koebe's Theorem. Let us
recall the argument, for completeness.

Take a compact $K\subset Q_T^{-1}(t_0)$ and an exhaustion of
$Q_T^{-1}(t_0)$ by relatively compact open subsets $\{
\Omega_n\}_{n\in{\mathbb N}}$. By a standard argument (e.g. Royden's
Lemma), each $\Omega_n$ can be holomorphically deformed to the
nearby fibers $Q_T^{-1}(t)$, $t\in U_n$ $=$ a neighbourhood of $t_0$
in $T$. Thus, the maps $i_t$, $t\in U_n$, can be seen as all defined
on the same domain $\Omega_n$. By Koebe's Theorem, the distorsion of
$i_t$ on $K\subset\Omega_n$ is uniformly bounded by a constant which
tends to zero as $n\to\infty$, since $Q_T^{-1}(t_0)$ is parabolic.
We get in this way that $i_t\vert_K$ uniformly converge to
$i_{t_0}\vert_K$ as $t\to t_0$, and since $K$ was arbitrary we get
the continuity of $u$.
\end{proof}

Remark that, a posteriori, the above map $u$ will be even
holomorphic, as well as the canonical section of $T_{\mathcal
F}^{\otimes 2}$.

It is now easy to complete the proof of Theorem \ref{main}.

\begin{proposition}\label{cylindrical}
If every component of ${\mathbb S}^1\setminus{\rm Cv}(f)$ is of type
${\mathbb C}^*$, then there is only one component, equal to
${\mathbb S}^1$, and $S$ is a Inoue surface of type
$S_{N,p,q,r;t}^{(+)}$ or $S_{N,p,q,r}^{(-)}$.
\end{proposition}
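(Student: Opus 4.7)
The plan is to deduce $c_1(T_{\mathcal F})=0$ from Lemma \ref{bisection}, and combine this with the flatness of $N_{\mathcal F}$ established in Proposition \ref{singular} to obtain $c_1^2(S)=0$ via the adjunction relation $N_{\mathcal F}\otimes T_{\mathcal F}=K_S^{-1}$. The rest then falls into place from the reductions already made in Section 1: by Noether's formula together with $\chi({\mathcal O}_S)=0$ this gives $c_2(S)=0$, which was noted to be equivalent to the nonexistence of critical values of $f$ (so $\mathbb{S}^1\setminus\mathrm{Cv}(f)$ is a single component), and forces $S$ to be an Inoue surface.

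The main technical step I would carry out is the following topological passage. Lemma \ref{bisection} provides a continuous, nowhere vanishing section of $T_{\mathcal F}^{\otimes 2}$ on $S\setminus\mathrm{Sing}({\mathcal F})$, so this line bundle is topologically trivial over the complement of the singular set. Because $\mathrm{Sing}({\mathcal F})$ is finite (Proposition \ref{singular}) and hence has real codimension $4$, an excision argument on small disjoint $4$-balls around the singularities, using $H^2_c(\mathbb{R}^4)=H^3_c(\mathbb{R}^4)=0$, shows that restriction induces an isomorphism $H^2(S,\mathbb{Z})\cong H^2(S\setminus\mathrm{Sing}({\mathcal F}),\mathbb{Z})$. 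Consequently $c_1(T_{\mathcal F}^{\otimes 2})=0$ in $H^2(S,\mathbb{Z})$, so $c_1(T_{\mathcal F})=0$ in $H^2(S,\mathbb{R})$. Combined with $c_1(N_{\mathcal F})=0$ (from $N_{\mathcal F}=L$ flat), this yields $c_1(K_S)=0$ in real cohomology, and hence $c_1^2(S)=0$ as a Chern number.

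Once $c_2(S)=0$ is in hand, the reductions of Section 1 (vanishing of $b_2$, Kodaira's absence-of-curves result, and Inoue's theorem) give that $S$ is an Inoue surface, and Proposition \ref{regular} in the cylindrical case identifies each regular fiber $M_\vartheta$ as an $\mathbb{S}^1$-bundle over $\mathbb{T}^2$ carrying the pullback of a linear irrational foliation. This rules out the $S_M$ type and leaves only $S^{(+)}_{N,p,q,r;t}$ or $S^{(-)}_{N,p,q,r}$, as required. The only place where any real work is needed is the invocation of Lemma \ref{bisection}; the cohomological passage from ``nonvanishing off finitely many points'' to ``$c_1=0$ on $S$'' is routine, and I do not expect any serious obstacle there.
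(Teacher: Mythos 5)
Your proposal is correct and follows essentially the same route as the paper: Lemma \ref{bisection} gives topological triviality (hence flatness) of $T_{\mathcal F}^{\otimes 2}$, which combined with Proposition \ref{singular} and the adjunction relation yields $c_1^2(S)=0$, and the reductions of Section 1 finish the argument. The only difference is that you spell out the excision step showing $H^2(S,\mathbb{Z})\cong H^2(S\setminus{\rm Sing}({\mathcal F}),\mathbb{Z})$, which the paper leaves implicit.
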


\begin{proof}
By Lemma \ref{bisection}, the line bundle $T_{\mathcal F}^{\otimes
2}$ is topologically trivial, i.e. it is flat. From Proposition
\ref{singular} and $K_S^{-1} = N_{\mathcal F}\otimes T_{\mathcal F}$
it follows that $K_S$ is flat too, and so $c_1^2(S)=0$. As explained
at the beginning, this is the same as $c_2(S)=0$, the foliation is
nonsingular, and $S$ is a Inoue surface (of the claimed type).
\end{proof}

As in the planar case, also in the cylindrical case we do not need
the full strength of Inoue's theorem, since we can directly prove
that a covering of $S$ is isomorphic to ${\mathbb H}\times{\mathbb
C}^*$.

\end{document}